\newtheorem{theorem}{Theorem}
\newtheorem{lemma}[theorem]{Lemma}
\newcommand{\nc}{\newcommand}
\nc{\rnc}{\renewcommand}
\nc{\ip}{idempotent}
\nc{\Ip}{Idempotent}
\nc{\sgp}{semigroup}
\nc{\alg}{algebra}
\nc{\sla}{semilattice}
\nc{\ioi}{if and only if~}
\nc{\cs}{completely simple}
\nc{\ch}{character}
\nc{\supp}{\operatorname{supp}}
\title{Groupoids on a skew lattice of objects}
\author{d. g. fitzgerald}
\address{School of Mathematics and Physics, University of Tasmania, Private Bag 37, Hobart 7001, Australia }
\email{D.FitzGerald@utas.edu.au}
\keywords{Inductive groupoids, skew lattices, orthodox semigroups}
\subjclass[2010]{20L05, 20M19, 06B75}
\begin{document}
\dedicatory{To Jonathan Leech---Na zdravje!}
\maketitle

\begin{abstract}

Motivated by some alternatives to the classical logical model of boolean algebra, this paper deals with algebraic structures which extend skew lattices by locally invertible elements. 
Following the meme of the Ehresmann-Schein-Nambooripad  theorem, we consider a  groupoid (small category of isomorphisms)  in which the set of objects carries the structure of a skew lattice.  The objects act on the morphisms by left and right restriction and extension mappings of the morphisms, imitating those of an inductive groupoid.  
Conditions are placed on the actions, from which pseudoproducts may be defined.  This gives an algebra of signature $(2,2,1)$, in which each binary operation has the structure of an orthodox semigroup.  In the reverse direction, a groupoid of the kind described may be reconstructed from the algebra.

\end{abstract}

\section{Non-commutative and non-idempotent lattice analogues}\label{intro}
As non-classical logics have been developed for various knowledge domains, so various algebras have been proposed as extensions or alternatives to the classical model of boolean algebra. A significant one for this paper is the theory of skew lattices; for a contemporary account, see Leech's surveys \cite{Le89, Le96}.  We provide the details of our notation in Section \ref{grpds2}.  Another proposal is that of MV-algebras, and their co\"{o}rdinatisation via inverse semigroups as described by Lawson and Scott~\cite{LS}.
Thus one theme is to allow  
sequential operations and hence non-commutative logical connectives, and another introduces non-idempotent connectives. 
This note will consider a combination of these themes, by seeking reasonable structures which extend skew lattices by locally invertible elements. 
 
The principal tool in this construction is based on the ideas behind the Ehresmann-Schein-Nambooripad (ESN) theorem, of which a full account is given in Chapter 4 of Lawson's book \cite{La}: we consider a small category of isomorphisms  in which the set of objects carries the structure of a skew lattice.  We postulate that the objects act (partially) on the morphisms by left and right restriction and extension mappings of the morphisms, imitating those of an inductive groupoid.  
Certain reasonable conditions are postulated, and from these a suitable pseudoproduct is defined, much as in the inverse semigroup case, for each skew lattice operation (a non-commutative ``meet'' and ``join'').  This results in a total algebra involving two orthodox semigroups with a common set of idempotents isomorphic to the given skew lattice. 

Because of the complexity involved in having two operations, we begin by considering a groupoid over a set of objects with a single band operation.  A much more general situation has been studied, under the name of \emph{weakly $B$-orthodox} semigroups, by Gould and Wang in \cite{GW}, but because the present author has been unable to find this special case treated in the literature, a detailed account will be given here.  
Later sections deal with the pair of linked band operations, construct the total algebra described above, and show how the original groupoid may be recovered from the algebra.  Aspects of the constructions which need further elaboration are noted in the final section.

\section{Groupoids on a band of objects} \label{grpds1}
Let us recall from \cite{La} that an inverse semigroup is equivalent to an \emph{inductive groupoid}, i.e., 

\begin{itemize}
\item a (small) category of isomorphisms with 
\item a meet operation on objects and 
\item a notion of restriction of a morphism to any of its subdomains.
 
\end{itemize}

We attempt something similar here, but changing the conditions on the set of objects.  Let $\mathscr{G}$ be a groupoid with composition $\circ$ and $B$ its set of objects, endowed with an associative and idempotent operation $\wedge$. 
Then  $(B, \wedge) $ is known as a \emph{lower} band, and possesses
a pair of natural preorders:  we write
 
\begin{itemize}
 \item $a\leq_{L} b$ \ioi $a = a\wedge b$, and  $a\leq_{R} b$ \ioi $a = b\wedge a$.
\end{itemize}
%

As usual, we may identify each object $b$ with its identity ${\bf i}_{b}$, and write ${\bf d}g$  and ${\bf r}g$ for the domain and range maps in $\mathscr{G}$, thus: ${\bf d} g= g\circ g^{-1}, \: {\bf r}g = g^{-1}\circ g$.  Suppose too that for each $a\in B$ there are \emph{left} and \emph{right restriction} (partial) operations $_{a}|,\, |_{a} :  \mathscr{G} \rightarrow \mathscr{G} $ such that:
\begin{itemize}
\item  $_{a}|g$ is defined whenever $a \leq_{L} {\bf{d}}g $,  
with 
\item $_{a}|g : a \rightarrow {\bf r}(_{a}| g)\leq_{L}{\bf r}g$;   
\end{itemize}

and (lateral-) dually, 
\begin{itemize}

\item $g|_{a}$ is defined whenever $a \leq_{R} {\bf{r}}g $, with $g|_{a} : {\bf d}(g|_{a}) \rightarrow a$, \: ${\bf d}(g|_{a}) \leq_{L}{\bf d}g$.

\end{itemize} 

\begin{figure}[h!]
\caption{Left and right restriction operators}
\label{fig.1}
\subfloat{ 
 \begin{tikzpicture}    
\tikzset{node distance=2.5cm, auto}
  \node (c) {$c$};   \node (x) [below of=c] {$$};
  \node (d) [right of=c] {$d$};
  \draw[->] (c) to node {$g$} (d);
  \node (a) [below of=c] {$a$};
  \node (y) [right of=a]  {${\bf r}(_{a}|g)$}; 
  \draw[->] (a) to node [swap]{$_{a}|g$} (y);
  \draw[red] (c.270) to node [swap]{$a\leq_{L}c$} (a.90);
\end{tikzpicture}
}
\hspace{2cm}
\subfloat{   
\begin{tikzpicture}
\tikzset{node distance=2.5cm, auto}
  \node (c) {$c$};   \node (x) [below of=c] {$$};
  \node (d) [right of=c] {$d$};
  \draw[->] (c) to node {$g$} (d);
  \node (a) [below of=c] {${\bf d}(g|_{a})$};
  \node (y) [right of=a]  {$a$}; 
  \draw[->] (a) to node [swap]{$g\vert_{a}$} (y);
  \draw[blue] (d.270) to node {$a \leq_{R}d$} (y.90);
\end{tikzpicture}
}
%
 
\end{figure}
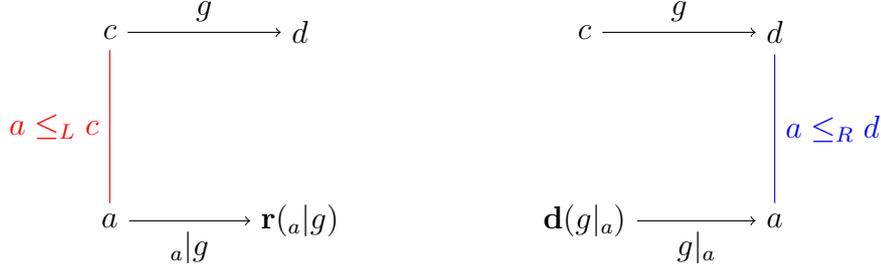

Figure \ref{fig.1} shows the left and right restrictions.  (There are analogous (in fact, vertically dual) requirements for  extension operators, which will 
be dealt with more explicitly in Section \ref{grpds2}.)
Certain sensible axioms must be satisfied:

%
%

\begin{itemize}
 \item (identities) \hspace{1em}
 $_{{\bf d}(g) }|g = g$;
 \item (preorders) \hspace{1em}if $a \leq _{L}b$, then $ _{a}|{\bf i}_{b} = {\bf i}_{a}$;
 \item (transitivity) \hspace{1em} if $a \leq_{L} b \leq_{L}{\bf{d}}g $,
 then $_{a}|g = _{a\wedge b}|g = \,_{a}\!|(_{b}|g)$;  
 \item (composition) \hspace{1em} if $f\circ g$ is defined (so that ${\bf r}\!f = {\bf d}g$), then 
    \[_{a}|(f\circ g) = \,(_{a}|f)\circ (_{{\bf r}(_{a}|f )}|g),\]
    the right-hand composite being defined because ${\bf r}(_{a}|f )\leq_{L}{\bf r}\!f = {\bf d}g$; see Fig. 2.
\end{itemize}

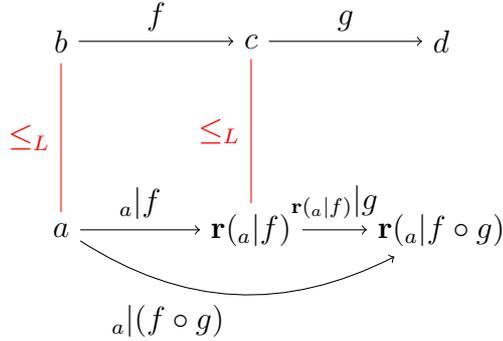
\begin{figure}[h!]
\caption{Restriction of a composite morphism}
\label{fig.2}
\begin{tikzpicture}
\tikzset{node distance=2.5cm, auto}
  \node (c) {$c$};   \node (x) [below of=c] {$$};
  \node (d) [right of=c] {$d$};
  \draw[->] (c) to node {$g$} (d);
  \node (z) [below of=c] {${\bf r}(_{a}|f) $};
  \node (y) [right of=z]  {${\bf r}(_{a}\vert f\circ g)$}; %
  \draw[->] (z) to node {$_{{\bf r}(_{a}|f)} \vert g$} (y);
  \draw[red] (c.270) to node [swap]{$\leq_{L}$} (z.90);
 \node (b) [left of=c] {$b$};
 \node (a) [below of=b] {$a$};
  \draw[red] (b.270) to node [swap]{$\leq_{L}$} (a.90);
  \draw[->] (b) to node {$f$}(c);
  \draw[->] (a) to node {$_{a}|f$} (z);
  \draw[->, bend right] (a) to node [swap] {$_{a}|(f\circ g)$}(y);  
\end{tikzpicture}
\end{figure}
\subsection*
 {Actions and conjugacy}
Let us write, without prejudice, $a^{g}$ as an alternative for ${\bf r}(_{a}|g)$, and $^{g}a$ for ${\bf d}(g|_{a})$.  This lightens the notation, and emphasises the similarity to actions and conjugates.  {\it Caution:} However, it should not be taken to mean that anything like $a^{f} = (f^{-1}\vert_{a}) \circ (_{a}\vert f)$, or  
$a^{f} = \,^{f^{-1}}\!a$, or $_{a}\vert f = f\vert_{a^{f}}$ necessarily hold
: in general, $f\vert_{a^{f}} = f\vert_{\mathbf{r}\!f\wedge a^{f}}$.

What we do have, following from $_{a}|(f\circ g) = \,(_{a}|f)\circ (_{{\bf r}(_{a}|f )}|g)$, is that  
\begin{multline*}
_{a}|{\bf i}_{b} = {\bf i}_{a}  =  \,(_{a}|f)\circ (_{a^{f}}|f^{-1}), \text{       so   } \\
 (_{a}|f)^{-1} = \,_{a^{f}}|f^{-1} , \hspace{12pt} a^{{\bf i}_{b}} = a, \hspace{12pt}  \text{   and    } \hspace{12pt} a^{f} = \,(_{a^{f}}\vert f^{-1})\circ (\,_{a}\vert f) ; 
\end{multline*} 
moreover, $a^{f\circ g} = (a^{f})^{g}$.
We want to link right and left ``actions''  by 
 $ (_{a}\vert f)\vert_{b} ~=~ _{a}\!\vert (f\vert_{b}), $ but there is a little problem here, since one of the two sides of that equation may fail to be defined while the other is defined.  We therefore seek  to extend the conditionally-defined restrictions to total maps by the following device, based on the pseudoproduct construction familiar from the ESN theorem:

For $g\in \mathscr{G}$ and any $a\in B$, define  $_{a}|g := _{a\wedge {\bf d}g}\!\vert g$, the right hand side being meaningful since $a\wedge{\bf d}g \leq_{L}{\bf d}g $.  (Note that if $a\leq{\bf d}g$ already, the notations agree).  The next figure shows the situation (where, also by extension, we write $a^{g}$ for the already-defined  $(a\wedge {\bf d}g)^{g}$).

\begin{figure}[h!]
\caption{Generalised restriction of $g$ to object $a$ and action of $g$ on $a$}
\label{fig.3}
\begin{tikzpicture}
\tikzset{node distance=2.5cm, auto}
  \node (b) {$b$};   \node (x) [below of=b] {$$};
  \node (c) [right of=b] {$c$};
  \node (a^b) [node distance=1cm,  left of=x] {$a\wedge b$};
  \node (y) [node distance=3cm, right of=a^b]  {$(a\wedge b)^{g}=a^{g}$};
  \draw[->] (a^b) to node [swap]{$_{a}|g$} (y);
  \draw[->] (b) to node {$g$} (c);
  \draw[red] (b) to node {$\leq_{L}$} (a^b);
  \node (a) [node distance=2cm,  left of=b]{$a$};
  \draw [blue] (a) to node [swap]{$$}(a^b);
\end{tikzpicture}
\end{figure}
 Then if $g={\bf i}_{b}$, we have $a\wedge {\bf i}_{b} = _{a\wedge b}\!\vert{\bf i}_{b} = {\bf i}_{a\wedge b} = 
 {\bf i}_{a} \wedge {\bf i}_{b} = {\bf i}_{a}\wedge b = {\bf i}_{a}\vert_{a\wedge b} $, and we may write $a\wedge g$ for $_{a}\vert g$ without conflict.  A little re-writing of definitions shows that 
 \begin{equation} \label{abg}
 (a\wedge b)\wedge g = _{a\wedge b}\vert g = \;_{a}\vert(_{b}\vert g) = a\wedge (b\wedge g) 
 \end{equation}
 and 
 \begin{equation} \label{agg*}
 a\wedge \mathbf{d}g = \mathbf{d}(a\wedge g) = (a\wedge g)\circ (a\wedge g)^{-1}.
 \end{equation}
We  complete our list of postulates with the previously-mentioned $ (_{a}\vert f)\vert_{b} ~=~ _{a}\!\vert (f\vert_{b}), $ which we now write as 
\begin{itemize} 
  \item $(a\wedge f)\wedge b = a\wedge (f\wedge b) ,$ for all $a,b\in B$ and $f\in \mathscr{G}$ .
\end{itemize}
(More fully, this is $ (_{a\wedge {\mathbf d}f}\vert f)\vert_{a^{f}\wedge b} ~=~ _{a\wedge \!^{f}b}\!\vert (f\vert_{b\wedge \mathbf{r}f}) $ .)

Next, we may extend the composition further, to a \emph{pseudoproduct} $\otimes$: when $f:z\rightarrow a$ and $g:b\rightarrow c$, we define
\[f\otimes g := (f\vert_{a\wedge b})\circ (_{a\wedge b}\vert g)=(f\wedge (a\wedge b))\circ ((a\wedge b)\wedge g) . \]

\begin{figure}[h!]
\caption{Diagram illustrating the pseudoproduct}
\label{fig.4}
\begin{tikzpicture}
\tikzset{node distance=2.5cm, auto}
  \node (b) {$b$};   \node (x) [below of=b] {$$};
  \node (c) [right of=b] {$c$};
  \node (a^b) [node distance=1cm,  left of=x] {$a\wedge b$};
  \node (y) [node distance=3cm, right of=a^b]  {$(a\wedge b)^{g}$};
  \draw[->] (a^b) to node [swap]{$_{a\wedge b}\vert g$} (y);
  \draw[->] (b) to node {$g$} (c);
  \draw[red] (b) to node {$\leq_{L}$} (a^b);
  \node (a) [node distance=2cm,  left of=b]{$a$};
  \node(z) [left of=a]{$z$};
  \node(e)[left of=a^b]{$^{f}(a\wedge b)$};
  \draw [->](z)to node {$f$}(a);
  \draw [blue] (a) to node [swap]{$\leq_{R}$}(a^b);
  \draw[->] (e) to node[swap] {$f\vert_{a\wedge b}$} (a^b);
\end{tikzpicture}
\end{figure}
Then $a\wedge f$ is actually just ${\bf i}_{a}\otimes f$.  This is indeed an extension of meaning: when $f\circ g$ is defined, $f\otimes g = f\circ g$, and when $f = {\bf i}_{a}$ and $g = {\bf i}_{b}$, 
\[f\otimes g = {\bf i}_{a}\otimes{\bf i}_{b} = {\bf i}_{a\wedge b} = {\bf i}_{a}\wedge {\bf i}_{b} ;\]
so we may as well use just the one symbol $\wedge$ for $\otimes$, as it extends $\circ$ and the restrictions, as well as the original $\wedge$ on $B$.  Let us check remaining non-trivial cases for associativity.

\begin{lemma}\label{1} 
 For all $f, g \in \mathscr{G}$ and $e\in B$, with $f\colon \mathbf{d}f \rightarrow a$ and $g\colon b \rightarrow \mathbf{r}g$, 
 \newline \emph{ (i)}  $ (f\wedge e)\wedge g = f\wedge (e\wedge g) $,
  \newline \emph{ (ii)} $e^{f\wedge g} = (e^{f})^{g}$, and 
 \newline \emph{ (iii)} $ _{e}\vert (f\wedge g) = (_{e}\vert f)\wedge g$.
 \end{lemma}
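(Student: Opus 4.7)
The plan is to unpack both sides of each identity using the pseudoproduct formula $h\wedge k = (h|_{p\wedge q})\circ({}_{p\wedge q}|k)$ (with $p=\mathbf{r}h$, $q=\mathbf{d}k$), and then to reconcile the resulting factors using the composition axiom, the linking axiom $(e\wedge f)\wedge b = e\wedge(f\wedge b)$, equation~\eqref{abg}, and their lateral duals.  At the band level the essential lever is idempotence in the form $(a\wedge b)\wedge(a\wedge b) = a\wedge b$, which forces the apparent quadruple meets that arise in intermediate steps to collapse.

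For (i), I set $c := a\wedge e\wedge b$; by associativity of $\wedge$ on $B$, this is the middle object of the pseudoproduct on both sides.  On the left, $(f\wedge e)\wedge g$ expands to $((f|_{a\wedge e})|_c)\circ({}_c|g)$, and the outer right restriction collapses to $f|_c$ by the lateral dual of the transitivity axiom (using $c\leq_R a\wedge e$).  On the right, $f\wedge(e\wedge g)$ expands to $(f|_c)\circ({}_c|({}_{e\wedge b}|g))$, and the inner left restriction collapses to ${}_c|g$ by transitivity, using $c\leq_L e\wedge b$ (verified by $c\wedge(e\wedge b) = a\wedge(e\wedge b)\wedge(e\wedge b) = c$ via idempotence of $e\wedge b$).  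Both sides therefore equal $f|_c\circ{}_c|g$.

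For (iii), I apply the composition axiom to ${}_e|\bigl((f|_{a\wedge b})\circ({}_{a\wedge b}|g)\bigr)$.  The linking axiom rewrites the first factor ${}_e|(f|_{a\wedge b})$ as $({}_e|f)|_{a\wedge b}$; since $e^f\leq_L a$ this simplifies to $({}_e|f)|_{e^f\wedge b}$, with range $e^f\wedge b$.  Equation~\eqref{abg} then rewrites the second factor ${}_{e^f\wedge b}|({}_{a\wedge b}|g)$ as ${}_{(e^f\wedge b)\wedge(a\wedge b)}|g$, and the substitution $e^f\wedge a = e^f$ combined with the idempotence $(a\wedge b)\wedge(a\wedge b) = a\wedge b$ collapses the subscript to $e^f\wedge b$, yielding ${}_{e^f\wedge b}|g$.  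These are precisely the two factors of $({}_e|f)\wedge g$.

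Statement (ii) is then immediate by taking ranges in (iii): $e^{f\wedge g} = \mathbf{r}({}_e|(f\wedge g)) = \mathbf{r}(({}_e|f)\wedge g)$, and the range of this pseudoproduct is $\mathbf{r}({}_{e^f\wedge b}|g) = (e^f\wedge b)^g = (e^f)^g$ in the extended notation.  The only real obstacle throughout is the notational bookkeeping---identifying correctly at each step which idempotent of $B$ plays the role of the middle object, and which of the axioms or equations produces the required collapse.
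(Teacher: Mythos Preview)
Your argument is correct.  Part (i) is handled just as in the paper: both sides of the identity reduce to $f|_{c}\circ{}_{c}|g$ with $c=a\wedge e\wedge b$, using transitivity and its lateral dual together with idempotence of $e\wedge b$ in $B$.

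Where you differ from the paper is in the logical order of (ii) and (iii).  The paper proves (ii) first, by a direct computation showing $e^{f|_{b}}=e^{f}$ (since $e\wedge\mathbf{d}(f|_{b})\leq_{R}e\wedge\mathbf{d}f$) and $(e^{f})^{{}_{a}|g}=(e^{f})^{g}$; it then feeds the identity $e^{f|_{b}}=e^{f}$ into the proof of (iii) to identify the subscript on the second factor after applying the composition axiom.  You instead prove (iii) outright: the linking axiom lets you compute the range of ${}_{e}|(f|_{a\wedge b})$ as $e^{f}\wedge b$ without ever invoking (ii), and your collapse $(e^{f}\wedge b)\wedge(a\wedge b)=e^{f}\wedge b$ via the substitution $e^{f}=e^{f}\wedge a$ and idempotence of $a\wedge b$ is a clean purely band-theoretic replacement for the paper's use of $e^{f|_{b}}=e^{f}$.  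You then read (ii) off from (iii) by taking ranges.  Your ordering is marginally more economical, since (ii) becomes a free corollary; the paper's ordering has the advantage that (ii) stands on its own and the intermediate identity $e^{f|_{b}}=e^{f}$ is isolated as a reusable fact.
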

 \begin{proof}
 (i)  By definition, $ (f\wedge e)\wedge g = (f\vert_{e})\wedge g = (f\vert_{a\wedge e})\vert_{b}\;\circ \;_{a\wedge e}\vert g = (f\vert_{a\wedge e\wedge b})\;\circ \;(_{a\wedge e\wedge b}\vert g)$, while $ f\wedge (e\wedge g) = f\vert_{e\wedge b} \wedge \:_{e\wedge b}\vert g = f\wedge (e\wedge g) $.

 (ii)  We already have $e^{f\wedge g} =  e^{(f\vert_{b})\circ(_{a}\vert g)} =  (e^{f\vert_{b}})^{(_{a}\vert g)}.  $
Observe that  $e\wedge \mathbf{d}(f\vert_{b})\leq_{R}  e\wedge \mathbf{d}f$, since $\leq_{R}$ is left compatible (Fig. \ref{assist} 
 may assist the reader).  So $e\wedge f = e\wedge f\vert_{b}$  and $e^{f\vert_{b}} = e^{f}$.  Likewise $e^{f}\wedge b\leq_{L} a\wedge b$ and $(e^{f})^{g} = (e^{f})^{(_{a}\vert g)} = e^{f\wedge g}.$

(iii) Using $e^{f\vert_{b}} = e^{f}$ from (ii), we have 
\begin{align*}
   _{e}\vert (f\wedge g) &= \;_{e}\vert (f\vert _{b}\:\circ\; _{a} \vert g) &=& &
 \;(_{e}\vert (f\vert _{b}))\;\circ\; (_{e^{f\vert _{b}}} \vert (_{a} \vert g)) \\
&= \;((_{e}\vert f)\vert _{b})\;\circ\; (_{e^{f\vert _{b}}\wedge{a}} \vert g) &=& &\;((_{e}\vert f)\vert _{b})\;\circ\; (_{e^{f}} \vert g) \\
&=(_{e}\vert f)\;\wedge\;  g .
 \end{align*}
\end{proof} 
\begin{figure}[!h]
\caption{Diagram illustrating  $e^{f\wedge g} = (e^{f})^{g}$}
\label{assist}
\begin{tikzpicture}
\tikzset{node distance=2.2cm, auto}
  \node (b) {$b$};   \node (x) [below of=b] {$$};
  \node (c) [right of=b] {$\mathbf{r}g$};
  \node (a^b) [node distance=1cm,  left of=x] {$a\wedge b$};
  \node (y) [node distance=3cm, right of=a^b]  {$a^{g}$};
  \node (a) [node distance=2cm,  left of=b]{$a$};
  \node(z) [left of=a]{$\mathbf{d}f$};
  \node(w)[node distance = 3cm, left of=a^b]{$^{f}\!b$};
  \node(m)[node distance = 2.1cm, below of=a^b]{$e^{f}$};
  \node(e) [left of = w, node distance=1.5cm] {$e$};  
  \node (u) [node distance =2.8cm,left of =m] {$e\wedge ^{f}\!\!b$}; 
  \node (e^fg) [node distance = 2.8cm, right of = m]{$(e^{f})^{g}$};
  \draw [->](z)to node {$f$}(a);
  \draw[->] (a^b) to node [swap]{$_{a}\vert g$} (y);
  \draw[->] (b) to node {$g$} (c);
  \draw[red] (b) to node {$\leq_{L}$} (a^b);
  \draw [blue] (a) to node [swap]{$\leq_{R}$}(a^b);
  \draw[->] (w) to node[swap] { \;\;  $f\vert_{b}$} (a^b);
 \draw[->,bend right] (w) to node [swap]{$f\wedge g$} (y);
 \draw[blue] (z) to node [swap]{$\leq_{R}$}(w);
 \draw[red] (w) to node {$\leq_{L}$}(u);
 \draw[blue] (e) to node {}(u);
 \draw[->] (u) to node {} (m);
 \draw[->] (m) to node {} (e^fg);
 \draw[->,bend right](u) to node [swap]{$_{e}\vert (f\wedge g)$}(e^fg);
 \end{tikzpicture}
\end{figure}
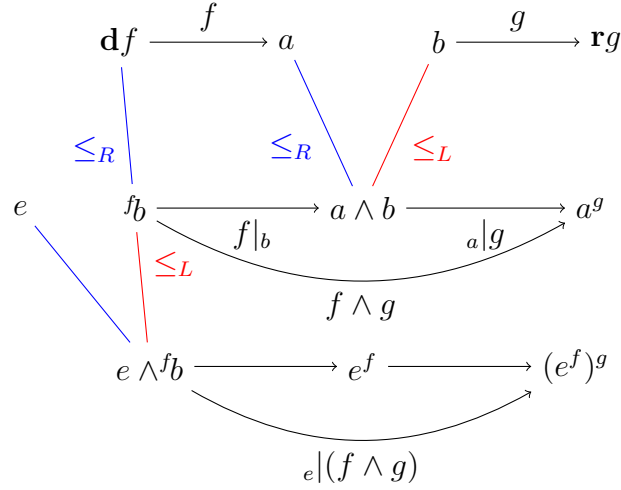
{It remains to prove associativity in full generality:} 
\begin{lemma}\label{assoc}
 For all $f,g,h\in \mathscr{G}$,
$
f\wedge(g\wedge h) = (f\wedge g)\wedge h $.
\end{lemma}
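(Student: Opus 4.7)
The strategy is to reduce both sides of the asserted identity to a common triple composition in the underlying groupoid $\mathscr{G}$, and then invoke the ordinary associativity of the groupoid composition $\circ$. Write $f\colon \mathbf{d}f\to a$, $g\colon b\to c$, $h\colon d\to\mathbf{r}h$. Applying the definition of the pseudoproduct directly gives
\begin{align*}
(f\wedge g)\wedge h &= \bigl((f\wedge g)|_{q}\bigr)\circ ({}_{q}|h), \qquad q = (a\wedge b)^{g}\wedge d,\\
f\wedge (g\wedge h) &= (f|_{p})\circ \bigl({}_{p}|(g\wedge h)\bigr), \qquad p = a\wedge {}^{g}(c\wedge d).
\end{align*}

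Next I push the outer restrictions past the inner pseudoproducts. Lemma~\ref{1}(iii) gives ${}_{p}|(g\wedge h) = ({}_{p}|g)\wedge h$, and its lateral dual $(f\wedge g)|_{q} = f\wedge (g|_{q})$ (established by the symmetric argument to Lemma~\ref{1}(iii)) is applied to the other side. Expanding the remaining pseudoproducts $f\wedge(g|_{q})$ and $({}_{p}|g)\wedge h$ once more by the definition, and using the postulate $(x\wedge g)\wedge y = x\wedge (g\wedge y)$ to amalgamate adjacent left and right restrictions of the middle factor, each side takes the canonical triple-composite form
\[
(f|_{\overline{p}})\circ \bigl({}_{\overline{p}}|g|_{\overline{q}}\bigr)\circ ({}_{\overline{q}}|h).
\]
The expansion of $(f\wedge g)\wedge h$ produces $\overline{p} = a\wedge {}^{g}q$ and $\overline{q} = q$; that of $f\wedge(g\wedge h)$ produces $\overline{p} = p$ and $\overline{q} = p^{g}\wedge d$.

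The main step, which is where the actual work lies, is to verify the coincidences $a\wedge {}^{g}q = p$ and $q = p^{g}\wedge d$. Here I use that $(a\wedge b)^{g}\leq_{L}c$, whence (by band associativity) $q = (a\wedge b)^{g}\wedge d = (a\wedge b)^{g}\wedge(c\wedge d)$, together with the dual relation ${}^{g}(c\wedge d)\leq_{L}b$; these, combined with the conjugacy identities displayed at the end of the ``Actions and conjugacy'' subsection, with Lemma~\ref{1}(ii), and with its lateral dual ${}^{f\wedge g}e = {}^{f}({}^{g}e)$, reduce the two required equalities to straightforward calculations in the band $B$. The postulate $(a\wedge f)\wedge b = a\wedge(f\wedge b)$ is the essential tool at this stage, being precisely what licenses the re-association of objects across a morphism. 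Once $(\overline{p},\overline{q})$ is shown to be the same pair on both sides, associativity of $\circ$ in $\mathscr{G}$ closes the argument. The main obstacle is thus the conjugacy bookkeeping in the matching step, not the structural shape of the proof.
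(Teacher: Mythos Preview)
Your route is workable but genuinely different from the paper's. You reduce both $(f\wedge g)\wedge h$ and $f\wedge(g\wedge h)$ to a canonical triple composite $(f|_{\overline p})\circ({}_{\overline p}|g|_{\overline q})\circ({}_{\overline q}|h)$ and then match the parameters $\overline p,\overline q$ via band and conjugacy calculations. The paper instead bootstraps: it first proves the special case $(f\circ g)\wedge h=f\wedge(g\wedge h)$ under the hypothesis $\mathbf{r}f=\mathbf{d}g$ (a short direct computation using the composition axiom and its lateral dual), and then in the general case writes $f\wedge g=(f|_{\mathbf{d}g})\circ({}_{\mathbf{r}f}|g)$ and applies the special case followed by Lemma~\ref{1}(iii) to get $(f\wedge g)\wedge h=f|_{\mathbf{d}g}\wedge{}_{\mathbf{r}f}|(g\wedge h)=f\wedge(g\wedge h)$. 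No conjugacy matching is required at all.

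The trade-off: your approach exhibits the underlying triple-composite normal form, which is conceptually satisfying, but the step you flag as ``the main obstacle''---showing $a\wedge{}^{g}q=p$ and $q=p^{g}\wedge d$---is exactly where the non-commutative band makes life awkward (recall the caution in the text that $_{a}|f=f|_{a^{f}}$ and kindred identities need \emph{not} hold). You sketch the tools but do not carry the calculation through; it is doable, yet more delicate than ``straightforward'' suggests, because $(-)^{g}$ and ${}^{g}(-)$ are not mutually inverse and one must track the $\leq_{L}$/$\leq_{R}$ sides carefully. The paper's two-step argument simply sidesteps this bookkeeping, which is the lesson worth taking away.
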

\begin{proof}
 First we establish that when $f\circ g$ is defined, 
 $(f\circ g)\wedge h = f\wedge (g\wedge h)$.  Let $r=\mathbf{r}g$ and $ d = \mathbf{d}h$; we have  
 \begin{eqnarray*}
 (f\circ g)\wedge h = &(f\circ g)\vert_{d}\;\circ\; _{r}\vert h &= (f\vert_{^{g}d} \circ g\vert_{d}) \circ\; _{r}\vert h \\
                              = &f\vert_{^{g}d} \circ (g\vert_{d} \circ\, _{r}\vert h ) &= f\vert_{^{g}d} \circ (g\wedge h );
\end{eqnarray*}
and since $^{g}d =\mathbf{d} (g\vert_{d}) = \mathbf{d}(g\wedge h)$, the latter is indeed $f\wedge (g\wedge h)$.  Now observe that, in the general case,    \[
  (f\wedge g)\wedge h =(f\vert_{\mathbf{d}g}\;\circ \;_{\mathbf{r}f}\vert g)\wedge h = f\vert_{\mathbf{d}g}\wedge (\;_{\mathbf{r}f}\vert g\wedge h )    \]
  by the foregoing; and then, by Lemma \ref{1} (iii), we have \[
  f\vert_{\mathbf{d}g}\wedge (\;_{\mathbf{r}f}\vert g\wedge h )  = f\vert_{\mathbf{d}g}\wedge \;_{\mathbf{r}f}\vert( g\wedge h ) = f\wedge (g\wedge h ), 
  \]   completing the proof.
\end{proof}
\begin{lemma} \label{orth}
 $S  =  (\mathscr{G}, \wedge)$ is an orthodox semigroup.
\end{lemma}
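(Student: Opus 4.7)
The plan is to establish regularity of $S$ and to show that its idempotents form a subsemigroup; since $(B,\wedge)$ is a band, such a subsemigroup will automatically be a band too. Associativity is provided by Lemma \ref{assoc}, so $S$ is already a semigroup.

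Regularity is straightforward: for $f\colon c\to d$, the groupoid inverse $f^{-1}$ should serve as a semigroup inverse. Because $f\circ f^{-1}$ and $f^{-1}\circ f$ are defined in $\mathscr{G}$ and $\wedge$ extends $\circ$, we have $f\wedge f^{-1}={\bf i}_{c}$ and $f^{-1}\wedge f={\bf i}_{d}$. The identity axiom ${}_{\mathbf{d}g}|g=g$ then yields $f\wedge f^{-1}\wedge f={\bf i}_{c}\wedge f={}_{c}|f=f$, with the dual equation handled symmetrically.

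The more interesting step is to identify the idempotents. The identities ${\bf i}_{b}$ ($b\in B$) are obviously idempotent and closed under $\wedge$ via ${\bf i}_{a}\wedge{\bf i}_{b}={\bf i}_{a\wedge b}$, so $\{{\bf i}_{b}:b\in B\}$ is a subsemigroup isomorphic to the band $(B,\wedge)$. To show every idempotent has this form, suppose $e\colon c\to d$ satisfies $e\wedge e=e$. Unpacking the pseudoproduct, $e\wedge e=(e|_{d\wedge c})\circ ({}_{d\wedge c}|e)$, whose range is $(d\wedge c)^{e}$ and whose domain is ${}^{e}(d\wedge c)$. Comparing with the range and domain of $e$, and using the identity-axiom values $c^{e}=d$ and ${}^{e}d=c$, we get $(d\wedge c)^{e}=c^{e}$ and ${}^{e}(d\wedge c)={}^{e}d$. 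The action $a\mapsto a^{e}$ on $\{a:a\leq_{L}c\}$ is injective because the restriction is itself invertible: $({}_{a}|e)^{-1}={}_{a^{e}}|e^{-1}$ yields $(a^{e})^{e^{-1}}=a$. Hence $d\wedge c=c$, giving $c\leq_{L}d$; the lateral dual on the domain side gives $d\leq_{R}c$. Together, $c=d$, and then $e\wedge e$ collapses to the groupoid composite $e\circ e$, forcing $e={\bf i}_{c}$.

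The one nontrivial step is this identification of $E(S)$ with $\{{\bf i}_{b}:b\in B\}$; its engine is the bijectivity of the ``action'' $a\mapsto a^{f}$ on sub-identities of $\mathbf{d}f$, which is recorded (if not emphasised) in the ``Actions and conjugacy'' discussion via the formula $({}_{a}|f)^{-1}={}_{a^{f}}|f^{-1}$.
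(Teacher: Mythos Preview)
Your proof is correct and follows essentially the same route as the paper: associativity from Lemma~\ref{assoc}, regularity from $f\wedge f^{-1}\wedge f=f$, and then the identification $E(S)=B$ by comparing domain and range of $e\wedge e$ with those of $e$ and invoking injectivity of the conjugacy action coming from $({}_{a}|f)^{-1}={}_{a^{f}}|f^{-1}$. One cosmetic slip: from $d\wedge c=c$ you should conclude $c\leq_{R}d$ (not $c\leq_{L}d$), and dually $d\wedge c=d$ gives $d\leq_{L}c$; this does not matter, since the two equalities $d\wedge c=c$ and $d\wedge c=d$ already force $c=d$ directly, exactly as in the paper.
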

\begin{proof}
 Lemma \ref{assoc} shows that $S$ is a semigroup. $S$ is regular, since $g\wedge g^{-1}\wedge g = g$ for any $g\in \mathscr{G}$.  
If $f\wedge f = f$, Fig. \ref{idem}
 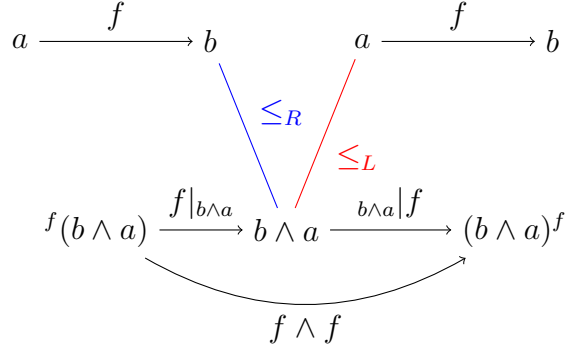
\begin{figure}[h!]
 \caption{Diagram for an idempotent}
 \label{idem}
\begin{tikzpicture}
\tikzset{node distance=2.5cm, auto}
  \node (b) {$a$};   \node (x) [below of=b] {$$};
  \node (c) [right of=b] {$b$};
  \node (a^b) [node distance=1cm,  left of=x] {$b\wedge a$};
  \node (y) [node distance=3cm, right of=a^b]  {$(b\wedge a)^{f}$};
  \draw[->] (a^b) to node {$_{b\wedge a}\vert f$} (y);
  \draw[->] (b) to node {$f$} (c);
  \draw[red] (b) to node {$\leq_{L}$} (a^b);
  \node (a) [node distance=2cm,  left of=b]{$b$};
  \node(z) [left of=a]{$a$};
  \node(e)[left of=a^b]{$^{f}(b\wedge a)$};
  \draw [->](z)to node {$f$}(a);
  \draw [blue] (a) to node {$\leq_{R}$}(a^b);
  \draw[->] (e) to node {$f\vert_{b\wedge a}$} (a^b);
  \draw[->, bend right] (e) to node[swap] {$f\wedge f$} (y);
\end{tikzpicture}
\end{figure}
leads to $a = \;^{f}(b\wedge a), ~b = (b\wedge a)^{f}$ and so $b\wedge a = a^{f} = b$. Dually, $b\wedge a = \,^{f}b = a$; then $f\circ f = f$ and $f = {\bf i}_{a}$.  Thus $E(S) = B$ and $S$ is orthodox.  
\end{proof}

It also follows that every \ip~ is of the form $f\wedge f^{\ast}$. With $s\in S$, put $s^{+} = s\wedge s^{*}$ and $s^{-} = s^{*}\wedge s$.  Clearly $(s^{*})^{+} = s^{-}$ and $(s^{*})^{-} = s^{+}$, while $s^{+}\,\mathscr{R} \, s\, \mathscr{L}\, s^{-}$ ($\mathscr{R}$ and $\mathscr{L}$ being the usual Green's relations in $S$) and $s^{*}$ is the unique inverse of $s$ such that $s^{+}\,\mathscr{L} \, s^{*}\, \mathscr{R}\, s^{-}$.

\begin{theorem} \label{skehr}
 For all $s,t\in S$, there hold:
 
\begin{enumerate}[(i)]
 \item $s^{+}\wedge s^{+} = s^{+} = (s^{+})^{+} = (s^{+})^{-}$ \hspace{6pt} and \hspace{6pt} $s^{-}\wedge s^{-} = s^{-} = (s^{-})^{-} = (s^{-})^{+}$;
 \item $s^{+}\wedge s = s = s\wedge s^{-}$;
 \item $s\wedge s = s$ implies $s = s^{+} = s^{-}$;

\item $(s\wedge t)^{+} = (s\wedge t^{+})^{+}$ \hspace{4pt} and \hspace{4pt} $(s\wedge t)^{-} = (s^{-}\wedge t)^{-}$ ;
\item $(s^{+}\!\wedge t)^{+} = s^{+}\!\wedge t^{+}$  \hspace{4pt} and \hspace{4pt} $(s\!\wedge t^{-})^{-} = s^{-}\wedge t^{-}$.
\end{enumerate}
\end{theorem}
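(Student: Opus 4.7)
The plan is to identify the chosen inverse $s^{*}$ with the groupoid inverse $s^{-1}$; once this dictionary is in hand, every identity in the theorem reduces to a direct pseudoproduct calculation. The key will be that $s^{+}=\mathbf{i}_{\mathbf{d}s}$ and $s^{-}=\mathbf{i}_{\mathbf{r}s}$, so both $+$ and $-$ simply project onto object-identities of $\mathscr{G}$.

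First I would establish $s^{*}=s^{-1}$. By the pseudoproduct formula, since $\mathbf{r}s\wedge\mathbf{r}s=\mathbf{r}s$ and $\mathbf{d}s\wedge\mathbf{d}s=\mathbf{d}s$, one has the immediate collapses $s\wedge s^{-1}=s\circ s^{-1}=\mathbf{i}_{\mathbf{d}s}$ and $s^{-1}\wedge s=\mathbf{i}_{\mathbf{r}s}$. Short pseudoproduct checks---such as $s^{-1}\wedge\mathbf{i}_{\mathbf{d}s}=s^{-1}$, which together with $s\wedge s^{-1}=\mathbf{i}_{\mathbf{d}s}$ shows $\mathbf{i}_{\mathbf{d}s}\,\mathscr{L}\,s^{-1}$, and dually on the right---then yield $\mathbf{i}_{\mathbf{d}s}\,\mathscr{L}\,s^{-1}\,\mathscr{R}\,\mathbf{i}_{\mathbf{r}s}$, so the uniqueness clause of the paragraph preceding the theorem forces $s^{*}=s^{-1}$.

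With this dictionary, parts (i)--(iii) are essentially immediate. For (i), $s^{+},s^{-}\in B$ are band-idempotent, and since the groupoid inverse of any identity is itself, one gets $(\mathbf{i}_{a})^{+}=(\mathbf{i}_{a})^{-}=\mathbf{i}_{a}$ for every $a\in B$. Part (ii) is a mere rebracketing of the equation $s\wedge s^{-1}\wedge s=s$ already used in Lemma~\ref{orth}. For (iii), a self-$\wedge$-idempotent $s$ must lie in $E(S)=B$, hence equals some $\mathbf{i}_{a}$ with $\mathbf{d}s=\mathbf{r}s=a$, and so $s=s^{+}=s^{-}$.

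The substantive work lies in (iv) and (v), and amounts to unfolding the pseudoproduct. Setting $e=\mathbf{r}s\wedge\mathbf{d}t$, the definition gives $s\wedge t=(s|_{e})\circ(_{e}|t)$, so $\mathbf{d}(s\wedge t)=\,^{s}e$ and $\mathbf{r}(s\wedge t)=e^{t}$. Using the collapses $_{e}|\mathbf{i}_{\mathbf{d}t}=\mathbf{i}_{e}=\mathbf{i}_{\mathbf{r}s}|_{e}$ (both consequences of the preorder axiom, since $e\leq_{L}\mathbf{d}t$ and $e\leq_{R}\mathbf{r}s$), I expect $s\wedge t^{+}=s|_{e}$ and $s^{-}\wedge t=\,_{e}|t$, which share the domain and range of $s\wedge t$ respectively; this yields both halves of (iv). For (v), the same manoeuvre gives $s^{+}\wedge t=\,_{\mathbf{d}s\wedge\mathbf{d}t}|t$ with domain $\mathbf{d}s\wedge\mathbf{d}t$, whence $(s^{+}\wedge t)^{+}=\mathbf{i}_{\mathbf{d}s\wedge\mathbf{d}t}=\mathbf{i}_{\mathbf{d}s}\wedge\mathbf{i}_{\mathbf{d}t}=s^{+}\wedge t^{+}$, and the companion $(s\wedge t^{-})^{-}=s^{-}\wedge t^{-}$ is lateral-dual. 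The only truly delicate step is thus the initial identification of $s^{*}$; once that is secured, the recurring mantra is that whenever one factor of a pseudoproduct is an object-identity, the corresponding restriction trivialises and only the other factor survives.
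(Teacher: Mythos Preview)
Your proposal is correct and follows the same route as the paper: both rest on recognising the pseudoproduct as $s\wedge t=(s\wedge t^{+})\circ(s^{-}\wedge t)$ (your identities $s\wedge t^{+}=s|_{e}$ and $s^{-}\wedge t={}_{e}|t$ are exactly this rewriting), whence (iv) is immediate from domain/range of a groupoid composite. Two minor notes: the paper obtains (v) from (iv) by the substitutions $s\mapsto s^{+}$, $t\mapsto t^{-}$ together with (i), which is a shade slicker than your direct calculation; and since $s^{*}$ in this section is \emph{by definition} the groupoid inverse $s^{-1}$, your opening verification of the dictionary is unnecessary (though harmless) caution.
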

 
\begin{proof}
 Parts (i)--(iii) follow by easy computation from the definitions and Lemma \ref{orth}. The definition of the extended $\wedge$ in the new notation (see Fig. \ref{+-}) reads $s\wedge t = (s\wedge t^{+})\circ (s^{-}\!\wedge t)$, and (iv) follows immediately. Part(v) is a consequence of (iv) with, respectively,  $s^{+}$ for $s$ and $t^{-}$ for $t$.   
\end{proof}

\begin{figure}[h!]
\caption{The pseudoproduct in $+ / - $ notation}
\label{+-}

\begin{tikzpicture}
\tikzset{node distance=2.5cm, auto}
  \node (b) {$t^{+}$};   \node (x) [below of=b] {$$};
  \node (c) [right of=b] {$t^{+}$};
  \node (a^b) [node distance=1cm,  left of=x] {$s^{-}\!\wedge t^{+}$};
  \node (y) [node distance=3.5cm, right of=a^b]  {$(s^{-}\!\wedge t)^{-}$};%
  \draw[->] (a^b) to node {$s^{-}\!\wedge t$} (y);
  \draw[->] (b) to node {$t$} (c);
  \draw[red] (b) to node {$\leq_{L}$} (a^b);
  \node (a) [node distance=2cm,  left of=b]{$s^{-}$};
  \node(z) [left of=a]{$s^{+}$};
  \node(e)[node distance=3.5cm, left of=a^b]{$(s\wedge t^{+})^{+}$};%
  \draw [->](z)to node {$s$}(a);
  \draw [blue] (a) to node [swap]{$\leq_{R}$}(a^b);
  \draw[->] (e) to node {$s\wedge t^{+}$} (a^b);
  \draw[->, bend right] (e) to node {$s\wedge t$} (y);
\end{tikzpicture}

\end{figure}
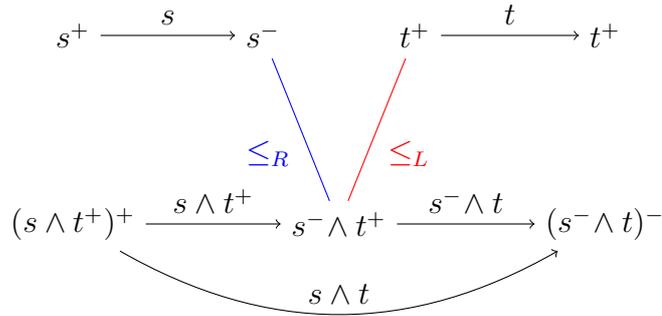

{\bf Remarks.   }  Theorem \ref{skehr} sets out the object part of a functor imitating that of the ESN theorem. There may be another occasion to describe the morphism part, which should also involve examining the properties in the Theorem,  since they include some of those forming the definitions of  restriction and Ehresmann semigroups.     
In fact, $(s\wedge t)^{+}\wedge s = s\wedge t^{+}$ \hspace{4pt} and \hspace{4pt} $t\wedge (s\wedge t)^{-} = s^{-}\!\wedge t$\; hold in a restriction semigroup as defined by Kudryavtseva \cite{Ku},  but fail here unless $B$ is a semilattice (in which case $S$ is inverse). The restriction and Ehresmann  classes are surveyed in \cite{Go}, and one may see the directions in which the ideas have been taken more recently in \cite{Jo} and \cite{Ku}.  This strand of research emphasises commuting idempotents, which distinguishes them from the present paper, where an element may have multiple left and right identities.  This may appear a little strange, but is the price to be paid for dealing with \emph{all} idempotents, not just a special subset.  More general contexts have already been considered, as in \cite{GW, Sz13, Sz14}, but the approach in hand is a natural and minimal extension of the inductive groupoid case, and returns to the spirit of groupoids as dealt with in another landmark paper---Lawson's \cite{La91}. 
Above all, our ultimate intent is to have $B$ as a skew lattice, and we deal with this in the next section.

We use the results from above:  beginning with  a skew lattice $B = (B,\wedge, \vee)$, we dualise the whole process of Section \ref{grpds1} to extend the join operation $\vee$ to $\mathscr{G}$, resulting in an algebra $S = (\mathscr{G}, \vee, \wedge)$.  

\section{Skew lattices of objects} \label{grpds2}
 Let $\mathscr{G}$ be a groupoid with composition $\circ$ and $B$ its set of objects, endowed with associative operations $\vee$ and $\wedge$ satisfying the absorptive axioms 
\[a\vee (a\wedge b) = a = a\wedge (a\vee b)~,\hspace {3em} (a\wedge b)\vee b  = b =  (a\vee b)\wedge b\]
 for a skew lattice  \cite{Le89, Le96}.  Then both $(B,\vee) $  and $(B, \wedge) $ are bands. 
 Moreover each has a pair of natural preorders: in the \emph{lower} band $(B,\wedge) $ we write (continuing on from the preceding Section \ref{grpds1}) 
 \begin{itemize}
 \item $a\leq_{L} b$ \ioi $a = a\wedge b$, and  $a\leq_{R} b$ \ioi $a = b\wedge a$, 
\end{itemize}
and additionally  in the \emph{upper} band $(B,\vee) $ we write 
\begin {itemize}
\item $a\geq_{L} b$ \ioi $a = a\vee b$, and $a\geq_{R} b$ \ioi $a = b\vee a$. 
\end{itemize}  
We do not at this stage admit the usual convention that $\leq$ and $\geq$ are converse relations! 
The skew lattice absorptive axioms imply that $ a=a\vee b \iff a\wedge b =b \text{   and   } a\vee b = b \iff a = a\wedge b$, so that $a \leq_{L} b$ \ioi  $a=a\wedge b$ \ioi $b \geq_{R} a$; which is to say that $ \leq_{L}$ and $\geq_{R}$ are converse relations, as also 
$ \leq_{R}$ and $\geq_{L}$.  We write the relations in the form most suitable to the occasion. 

As a vertical dual to the set-up in Section \ref{grpds1}, we postulate \emph{left} and \emph{right extension} operations denoted 
$^{a}|,\, |^{a} :  \mathscr{G} \rightarrow \mathscr{G} $ such that
\begin{itemize}
\item  $^{a}|g$ is defined whenever $a \geq_{L} {\bf{d}}g $,  and 
$^{a}|g : a \rightarrow {\bf r}(^{a}| g)\geq_{L}{\bf r}g$;   
\end{itemize}

and again (lateral-) dually, 
\begin{itemize}

\item $g|^{a}$ is defined whenever $a \geq_{R} {\bf{r}}g $, with $g|^{a} : {\bf d}(g|^{a}) \rightarrow a$, \: ${\bf d}(g|^{a}) \geq_{L}{\bf d}g$.
\end{itemize} 
\begin{figure}[h!]
\caption{Left and right extension operators}
\label{extens}
\vspace{3mm}

\subfloat{ 
 \begin{tikzpicture}	
\tikzset{node distance=2.5cm, auto}
  \node (c) {$a$};   
  \node (x) [below of=c] {$c$};
  \node (d) [right of=c] {${\bf r}(^{a}\vert g)$};
  \draw[->] (c) to node {$^{a}\vert g$} (d);
  \node (a) [below of=c] {$c$};
  \node (y) [right of=a]  {$d$}; 
  \draw[->] (a) to node [swap]{$g$} (y);
  \draw[blue] (c.270) to node [swap]{$a\geq_{L}c$} (a.90);
\end{tikzpicture}
}
\hspace{2cm}
\subfloat{
 \begin{tikzpicture}	
 \tikzset{node distance=2.5cm, auto}
  \node (c) {${\bf d}(^{a}|g)$};
  \node (d) [right of=c] {$a$};
  \draw[->] (c) to node {$g\vert^{a}$} (d);
  \node (a) [below of=c] {$c$};
  \node (y) [right of=a]  {$d$}; 
  \draw[->] (a) to node [swap]{$g$} (y);
  \draw[red] (d.270) to node {$a\geq_{R}d$} (y.90);
 \end{tikzpicture} 
 }
\end{figure}

The relevant diagrams appear in  Figure \ref{extens}.  Again we are able to write $a\vee g$ for $^{a}\vert g$ and by extension for $^{a\vee \mathbf{d}g}\vert g$, and $a_{g}$ for ${\bf r}(^{a}| g)$; similarly,  $_{g}a = {\bf d}(^{a}|g)$.

The postulates vertically dual to those of the preceding Section \ref{grpds1} are to hold also, and we list them here, using the abbreviated notation developed in Section \ref{grpds1} and without further explanation; moreover we only give one-sided forms, assuming the lateral duals hold by implication.  Thus each postulate stands for a quartet (although some are self-dual or may have vertical and lateral duals equivalent). 
\begin{enumerate}[(i)]	
 \item (identities) \hspace{1em}
 ${\bf d}g\vee g = g$;
 \item (preorders) \hspace{1em}if $a \geq _{R}b$, then $ {a}\vee{\bf i}_{b} = {\bf i}_{a\vee b}$;
 \item (transitivity) \hspace{1em} if $a \geq_{R} b \geq_{R}{\bf{d}}g $,
 then ${a}\vee g = (a\vee b)\vee g = a\vee (b\vee g)$;  
  \item (composition) \hspace{1em} if $f\circ g$ is defined (so that ${\bf r}\!f = {\bf d}g$), then 
 ${a}\vee(f\circ g) = (a \vee f)\circ (a_{f}\vee g)$;
 \item (dual of Theorem \ref{orth})\hspace{1em}  $(a\vee f)\vee b = a\vee (f\vee b) ,$ for all $a,b\in B$ and $f\in \mathscr{G}.$ 
\end{enumerate}

The vertical dual of the development in Section \ref{grpds1}  extends the join operation $\vee$ to all of $\mathscr{G}$ and of course the dual results hold.  In particular, we note that $a\vee f\vee f^{\ast} =  a\vee f\vee (a\vee f)^{\ast} $.  
Moreover, extra postulates are required to establish compatibility conditions between the restriction and extension operators which reflect the skew lattice character of $B$.  


Observe that when $f\circ g$ is defined, $f\vee g =f\wedge g = f\circ g$; in particular, $f\vee f^{-1} =f\wedge f^{-1} = {\bf d}f$, etc.   We write (to conform to precedent) $f^{\ast}$ in place of $f^{-1}$, and may as well write $ff^{\ast}$ for $f\circ f^{\ast}= f\wedge f^{\ast} = f\vee f^{\ast}$, etc.  The identification of $\mathbf{i}_{a}$ with $a$ also identifies $a^{\ast}$ with $\mathbf{i}_{a}^{-1} = \mathbf{i}_{a}$ and so $(a\wedge f)^{\ast} $ with ${a^{f}}\!\wedge f^{\ast}$, and similarly $(a\vee f)^{\ast} = a^{f} \vee f^{\ast}$.  

The restriction and extension operators should also be linked through the skew lattice orders.  Consider  any object $a \in B$ and morphism $f$; write ${\bf d}f = d = f f^{\ast}$ and ${\bf r}f = r = f^{\ast}f$, and set  $ b = r\vee a \, \geq_{R}\, r$.  
 Then $\,_{a}\vert f \colon a \rightarrow a^{f}$ exists, and $a^{f}\leq_{L} r$, which is to say $r \geq_{R} a^{f}$, and so there is $(_{a}\vert f)\vert^{r} \colon d' \rightarrow r$.  When $f = \mathbf{i}_{d}$, we see that $d' = d$ so it is reasonable that this hold in general. See Fig. \ref{link}. 
 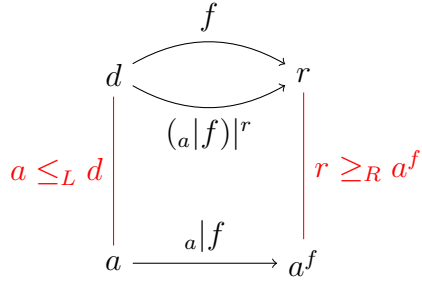
\begin{figure}[h!]
\caption{Restriction and extension operators linked}
\label{link}
 \begin{tikzpicture}
 \tikzset{node distance=2.5cm, auto}
  \node(d){$d$};
  \node(a)[below of=d] {$a$};
  \node(r)[right of=d] {$r$};
  \node(a^{f})[below of = r]{$a^{f}$};
 \draw[->, bend left] (d) to node {$f$} (r);
  \draw[red](d) to node [swap]{$a\leq_{L}d$} (a);
  \draw[->](a) to node{$_{a}\vert f $} (a^{f});
 \draw [red] (a^{f}) to node[swap] {$r\geq_{R}a^{f}$} (r);
 \draw [->, bend right] (d) to node [swap]{$(_{a}\vert f)\vert^{r} $} (r); 
\end{tikzpicture}
\end{figure}

Indeed we shall require, as a linking condition, that $(_{a}\vert f)\vert ^{r} = f$ and so we add to the previous list the axiom 
\begin{enumerate}[(vi)]	
 \item 
$f = (a\wedge f)\vee f^{\ast}f ,  \text{      or equivalently      } f f^{\ast}= (a\wedge f)\vee f^{\ast}.$
\end{enumerate}  
 We also assume the lateral and order duals, which  are interpreted similarly. Note that when $f\in B,  ~ f = f^{\ast} = ff^{\ast}$, and this equation reduces to the absorptive identity \[f = (a\wedge f)\vee f\]  of skew lattices.  
 
 We shall (tentatively) refer to a groupoid satisfying these conditions as a \emph{skew inductive groupoid}.
 Theorem \ref{orth} applies and assures the existence of an algebra $(S,\vee, \wedge, \ast)$ arising from a skew inductive groupoid. We now seek to characterise such an algebra axiomatically.
 
%

\section{Algebraic characterisation}\label{ax}


Let $(S,\vee,\wedge,*)$ be an algebra of signature $(2,2,1)$, with $\vee, \wedge \colon S\times S \rightarrow S$ and $^{\ast}: S \rightarrow S$, that satisfy, for all $s,t\in S$:

\begin{enumerate}[(i)]
 \item $(S, \vee)$ and $(S,\wedge)$ are associative (thus, semigroups); 
 \item $(s^{\ast})^{\ast} = s$;
 \item $s\vee s^{\ast} = s\wedge s^{\ast} = (s\wedge s^{\ast})^{\ast}$;
 \item $s\vee s^{\ast}\vee s = s = s\wedge s^{\ast}\wedge s$; 
 \item $s^{2}=s$ implies $s=s^{\ast}$;
 \item ${s\vee s^{\ast} }\vee (s\wedge {t^{\ast}}\wedge t) = s = {s\wedge s^{\ast} }\wedge (s\vee {t^{\ast}}\vee t)$ and lateral duals;
 \item $ s\vee s^{\ast}\vee t\vee t^{\ast} =  s\vee s^{\ast}\vee t\vee (s\vee s^{\ast}\vee t)^{\ast} $ and duals;
 \item $s^{\ast}\vee s = t\vee t^{\ast}$ implies 
 \[ s\vee t\vee(s\vee t)^{\ast} = s\vee s^{\ast} ~ \text{and}~ (s\vee t)^{\ast}\vee s\vee t = t^{\ast}\vee t~\text{and}  \]
\[ s\wedge t\wedge(s\wedge t)^{\ast} = s\wedge s^{\ast} ~ \text{and}~ (s\wedge t)^{\ast}\wedge s\wedge t = t^{\ast}\wedge t .\]

\end{enumerate}


 
%

The properties in Section \ref{grpds2}, including Lemmas \ref{1} and \ref{assoc} and Theorem \ref{orth}, show that we were able to construct such an object from a skew inductive groupoid.  Conversely, we have 
\begin{theorem}\label{main}
Let $(S,\vee,\wedge, {\ast})$ satisfy axioms (i)--(viii), and form a small category $\mathscr{C}$ as follows.
\begin{itemize}
 \item $\emph{Ob}(\mathscr{C}) = \{s\vee s^{\ast}\colon s\in S\}$,
 \item  $\emph{Mor}(\mathscr{C}) = \{\widehat{s} = (s\vee s^{\ast}\!,~s,~s^{\ast}\vee s) \colon s\in S\}$,
 \item  when $s^{\ast}\vee s = {\bf r}(\widehat{s}) = {\bf d}(\widehat{t}) = t\vee t^{\ast}, ~~\widehat{s}\circ \widehat{t}$ is defined and $ \widehat{s}\circ \widehat{t} = (s\vee s^{\ast},~st,~t^{\ast}\vee t).$
\end{itemize}
Then $\mathscr{C}$ is a skew inductive groupoid whose pseudoproduct gives an orthodox semigroup isomorphic with $S$. 
\end{theorem}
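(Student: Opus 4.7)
The plan is to proceed in four steps: (a) identify $B := \{s\vee s^\ast : s\in S\}$ as a skew lattice inside $S$; (b) verify that $\mathscr{C}$ is a groupoid; (c) define left and right restriction and extension operators on $\mathscr{C}$ and check the axioms of Sections \ref{grpds1} and \ref{grpds2}; and (d) show that the pseudoproduct reconstructed from $\mathscr{C}$ agrees with $\wedge$ and $\vee$ on $S$, making $s \mapsto \widehat{s}$ an isomorphism of $(2,2,1)$-algebras.

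For step (a), axiom (iv) gives $(s\vee s^\ast)^2 = (s\vee s^\ast\vee s)\vee s^\ast = s\vee s^\ast$, so $B$ consists of $\vee$-idempotents; by (v) each such element is $\ast$-fixed, and (iii) identifies its $\vee$- and $\wedge$-expressions. Closure of $B$ under both operations follows from (vii) and its $\wedge$-dual, while the skew-lattice absorption laws on $B$ are immediate from (vi). For step (b), axiom (viii) is precisely what makes the composite $\widehat{s}\circ \widehat{t} := \widehat{st}$, defined when $s^\ast\vee s = t\vee t^\ast$, a genuine morphism of $\mathscr{C}$: it yields $(st)\vee (st)^\ast = s\vee s^\ast$ and $(st)^\ast\vee st = t^\ast\vee t$. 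Associativity of $\circ$ reduces to associativity of $\wedge$ in $S$, and (ii)--(iv) supply identities and inverses $\widehat{s}^{-1} = \widehat{s^\ast}$.

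For step (c), declare $_a\vert\widehat{s} := \widehat{a\wedge s}$ when $a\leq_L s\vee s^\ast$, and introduce $\vert_a$, $^a\vert$, $\vert^a$ analogously, using $\vee$ for the extension operators. The identity, preorder, transitivity, and compatibility axioms reduce to idempotence in $B$ together with associativity of $\wedge$ (or $\vee$); the composition axiom reduces to associativity combined with (viii) to match boundary data after restricting a composite; and the linking axiom $f = (a\wedge f)\vee f^\ast f$ of Section \ref{grpds2} is precisely axiom (vii) rewritten, its lateral and order duals coming from the explicit duals in the axiom list. For step (d), one unfolds the pseudoproduct as $\widehat{s}\wedge \widehat{t} = \widehat{s\wedge u}\circ \widehat{u\wedge t}$ with $u = s^\ast s \wedge t t^\ast$; repeated applications of (vii) and (viii) cause the middle boundaries to match and the expression to collapse to $\widehat{s\wedge t}$. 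The dual argument handles $\vee$, and compatibility with $\ast$ is immediate from (ii)--(iv); bijectivity of $s\mapsto \widehat{s}$ is built into the definition of $\text{Mor}(\mathscr{C})$.

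The main obstacle will be the detailed bookkeeping in step (c) and the collapse computation in step (d): axiom (viii) is tailored so that taking the left boundary of a composite $st$ returns the left boundary of $s$, and this is what allows a restriction-then-extension composite to fold back into a single product in $S$. The argument echoes the delicate parts of the Ehresmann--Schein--Nambooripad correspondence, but it requires more care, because $B$ is only a band and not a semilattice, so multiple left and right identities must be tracked and one must repeatedly invoke (vi) or its duals to resolve boundary-matching at each juncture.
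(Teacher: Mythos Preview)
Your four-step plan matches the paper's proof almost exactly: identify $B$ as a skew lattice, check $\mathscr{C}$ is a groupoid via (viii), define the operators as $\widehat{a\wedge s}$ and $\widehat{a\vee s}$, and verify the postulates.  Two small points.  First, the linking condition $f=(a\wedge f)\vee f^{\ast}f$ is encoded by axiom (vi) and its lateral duals, not (vii); axiom (vii) is what you need instead for the left--right compatibility $(^{a}\vert\widehat{s})\vert^{b}=\,^{a}\vert(\widehat{s}\vert^{b})$, since it identifies $(a\vee s\vee b)(a\vee s\vee b)^{\ast}$ with $a\vee(s\vee b)(s\vee b)^{\ast}$.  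Second, in step~(d) you argue directly that the pseudoproduct collapses to $\widehat{s\wedge t}$, whereas the paper closes the loop more abstractly by observing that $g\mapsto(\mathbf{d}g,\,g,\,\mathbf{r}g)$ is an isomorphism $\mathscr{G}\to\widehat{S}$ factoring through $S$; your route is a bit more self-contained for an arbitrary $S$ satisfying the axioms, but both arrive at the same place.
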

\begin{proof}
From (vi) we have that $\emph{Ob}(\mathscr{C})$ is a skew lattice. 
Clearly composition when defined for triples is associative, and each $(s\vee s^{\ast}\!,~s\vee s^{\ast}\!,~s^{\ast}\vee s)$ is the identity at object $s\vee s^{\ast}$. 
Morphism $\widehat{s} = (s\vee s^{\ast}\!,~s,~s^{\ast}\vee s)$ has inverse 
$\widehat{s}^{-1} = (s^{\ast}\vee s, s^{\ast}, s\vee s^{\ast})$.   
The restriction and extension operators must be defined: for a morphism $\widehat{s} = (s\vee s^{\ast},~s,~s^{\ast}\vee s)$ and an object $a$ such that $a\geq_{L}s\vee s^{\ast}$ (i.e. $a = a\vee s\vee s^{\ast}$), we set 
\[^{a}\vert \widehat{s} = (a,~a\vee s,~(a\vee s)^{\ast}\vee a\vee s) .\]  
The r.h.s. is indeed in $\emph{Mor}(\mathscr{C}) $: by equation (\ref{agg*}),  
$ (a\vee s) \vee (a\vee s)^{\ast} = (a\vee s) \vee s^{\ast} = a$ by hypothesis.  Moreover, 
\[ \mathbf{r}(^{a}\vert \widehat{s}) \vee s \vee s^{\ast} = ((a\vee s)^{\ast}\vee a\vee s) \vee s \vee s^{\ast} =\mathbf{r}(^{a}\vert \widehat{s}),\] so $\mathbf{r}(^{a}\vert \widehat{s}) \geq_{L}\mathbf{r}(s)$, as required for an extension operator.

Next, the postulates of Section \ref{grpds2} have to be verified.  It is useful to observe that the right [left] component of a left- [right-]extended morphism depends solely on the middle component, and so may safely be left unspecified (written $\sim$) in certain calculations.
\begin{enumerate}[(i)]
\item (``identity'') follows from regularity (axiom (iv)).
\item (``preorder'') Assume $  a= b\vee a$.  By definition, \[
^{a}\vert \mathbf{i}_{b} =  ^{(a,a,a)}\!\vert (b,b,b) = (a\vee b, a\vee b, (a\vee b)^{\ast}\vee a\vee b) = \mathbf{i}_{a \vee b} .
\]
\item (``transitivity'') First, $ ^{b}\vert \widehat{s} = \;^{(b,b,b)}\vert (s\vee s^{\ast} ,s,s^{\ast}\vee s) = (b\vee  s\vee s^{\ast},b\vee s,\sim) $, so \newline
$ ^{a}\vert (^{b}\vert \widehat{s} ) = \;^{a}\vert (b\vee  s\vee s^{\ast},b\vee s,\sim) = (a\vee b\vee  s\vee s^{\ast},a\vee b\vee s,\sim) = \;
^{(a\vee b)}\vert \widehat{s}$,~by associativity of $S$.
\item (``composition'')
\begin {align*}
(^{a}\vert\widehat{s})\vert^{b} &= (a\vee s\vee s^{\ast}, a\vee s, (a\vee s)^{\ast}\vee a\vee s)\vert^{b} \\&= ((a\vee s\vee b)(a\vee s\vee b)^{\ast}, a\vee s\vee b, (a\vee s)^{\ast}\vee a\vee s\vee b),
\end{align*}
while
\begin {align*}
^{a}\vert(\widehat{s}\,\vert^{b}) &= \;^{a}\vert((s\vee b)(s\vee b)^{\ast}, s\vee b, (s\vee s)^{\ast}\vee b) \\
&= (a\vee (s\vee b)( s\vee b)^{\ast}, a\vee s\vee b, (a\vee s\vee b)^{\ast}\vee a\vee s\vee b),
\end{align*}
and by (the lateral dual of) axiom (vii), these are equal.
\item (``dual of Theorem \ref{orth}'')\hspace{1em} This follows from associativity in $S$.
\end{enumerate}
 In this manner we have constructed a groupoid $\widehat{S}$ over a skew lattice of objects.  Now suppose that $S$ arises from the original groupoid $\mathscr{G}$. The mapping  $\mathscr{G} \longrightarrow \widehat{S}$ given by 
 $g \mapsto (\mathbf{d}g, g, \mathbf{r}g)$ is routinely an isomorphism, simply representing different ways of describing $\mathscr{G}$; the fact that it factors through $S$ completes the proof.  
\end{proof}

\section{models}\label{gb}
Do such objects even exist?  One special case occurs with $\mathscr{G}$ a true inductive groupoid and $B$ a lattice.  Such a combination gives rise to two inverse semigroups (monoids in fact), and an easy way to realise such an object is by taking the direct product of a group with a lattice.  This could be the inspiration for a less trivial example, as follows.

Let a group $G$ act by automorphisms on a band $B$.  Then we may consider the semidirect product $S=G\ltimes B$ with base set $G\times B$ and multiplication, for $u,v \in G$ and $a,b \in B$, \[(u,a)(v,b)=(uv, a^{v}\cdot b).\]

This situation was studied some time ago by Miklos Hartmann and Maria Szendrei \cite{Ha, HSz} and maybe others I have not yet found; iand it seems to have been generalised in \cite{GW}.  All we need to note here is that 
\begin{itemize}
\item idempotents are exactly the elements $(1,a)$, and $E(S) \cong B$,
\item $S$ is regular with an involution $(u, a)^{*}=(u^{-1},a^{u^{-1}})$, such that
\item $(u,a)(u,a)^{*} =  (u,a)(u^{-1},a^{u^{-1}}) = (1, a^{u^{-1}}), \hspace{1em} (u,a)^{*} (u,a) = (1,a) $
\item $(u,a)(u,a)^{*}(u,a) = (u,a)$
\item so $S$ is orthodox but not inverse
\item and $*$ is {\bf not} an anti-automorphism.
\end{itemize}

On this last point, let us observe that $[(u,a)(v,b)]^{\ast} = (v^{-1}u^{-1}, a^{u^{-1}}\wedge b^{v^{-1}u^{-1}})$, so \[
[(u,a)(v,b)]^{\ast}[(u,a)(v,b)] = (1, (a\wedge b^{v^{-1}})^{u^{-1}}),	\]
which reduces to $(1,a^{u^{-1}})$ precisely when $a = b^{v^{-1}}$, i.e., when $(u,a)^{\ast}(u,a) = (v,b)(v,b)^{\ast}$.
In structural terms, these are both equivalent to $(u,a)\,\mathscr{R}\,(u,a)\!(v,b)\,\mathscr{L}\,(v,b)$.  (This may also be relevant to criteria for composibility in the double-orthodox semigroup set-up.)

We may conventionally write a ``normal form'' $ua $ for $(u, a)$.  Then $(u,a)=(u,\top)(1,a)$ when $B$ has a top element $\top$,  and so $S= GB$ and we have the factorisable case.  Otherwise, $S\cup G$ is factorisable and $S$ almost factorisable.  See also Rina-e Zenab's recent article \cite{Ze}, and its references, for Zappa-Sz\'{e}p products of which this is also an example.

The map $\phi :S \rightarrow G, \hspace{6pt}ua \mapsto u$ 
partitions $S$ into blocks $S_{u}= u\phi ^{-1}$, and $S_{u}$ is isomorphic with $B$ when given the sandwich multiplication (for $ua, ub \in S_{u})$,   $ ua \star ub = ua(u^{-1})ub =uab$; so  $S$ is a ``group of (isomorphic) sandwich bands''. Conversely, given such a $\{B_{u}\colon u \in G\}$ with connecting isomorphisms 
\[\{\lambda_{u,v}, \rho_{u,v}: B_{u}\rightarrow B_{v}\}\] satisfying the right axioms, one may reconstruct $S=\bigcup B_{u}$ with multiplication (for $s\in B_{u}, t\in B_{v}$) given by
 \[s\cdot t = s\rho_{u,uv} \star t\lambda_{v,uv}\] 
 with $\star$ the multiplication in $B_{uv}$.  (There is nothing special about this, it's just another description of a semidirect product.)

Then we can see what happens when we do it twice over, replacing $\cdot$ by $\wedge$ and $\vee$ .  
(We will end up with an algebra of signature $(2,2,1)$.) Note that $(u,a) = u\wedge a = u\vee a$ in the normal form, and 
\[ua\wedge (ua)^{*} = ua\wedge u^{-1}a^{u^{-1}} = 1\wedge a^{u^{-1}}, \hspace{2em} (ua)^{*}\wedge ua = u^{-1}a^{u^{-1}}\wedge ua = 1\wedge a ;\]  and exactly the same with the $\vee$ operation.  Thus $s\vee s^{*} = s\wedge s^{*} = ss^{*}$, etc. (using juxtaposition where either main operation may be applied). So the absorptive identity 
$a\vee(a\wedge b) = a$ is equivalent to 
 \[ s^{*}s\vee (s^{*}s \wedge t^{*}t) = s^{*}s  \hspace {2em} \text{and so to} \hspace{2em} s\vee (s^{*}s \wedge t^{*}t) = s ;\]
and likewise for the lateral and order  duals.
 
The theory of inverse semigroups suggests that we investigate an idempotent-separating $*$-congruence $\sim $ of such a $G\ltimes  B$.  If  $ua \sim vb$ then $a \sim b = a$; so we are led to consider the subgroups 
$K_{a}:=\{u\in G | ua = a\} .$ Now  $$K_{a} \subseteq K_{a\vee b} \subseteq K_{(a\vee b)\wedge b} =K_{b} $$
for all $a,b \in B$; thus $K_{a} = K \unlhd G$, say; and we may as well have started with $G/K$.

The groupoid version of $G\ltimes  B$ may be presented as follows.  Given a skew lattice $B$ and a group $G$ acting by automorphisms on $B$, make a category with objects from $B$ and morphisms $(b,g,b^{g})$. The composition $(b,g,b^{g})\circ (c, h, c^{h})$ is defined exactly when $b^{g}=c$, and is given by $(b,g,b^{g})\circ (c, h, c^{h}) = (b, gh, b^{gh})$.
If one works it through, one has the pseudoproduct  \[
(b,g,b^{g})\otimes_{\wedge} (c,h,c^{h}) = (b\wedge c^{g{-1}},gh, b^{gh}\wedge c^{h}) ,  \]
which we may abbreviate $(g,b^{g})\cdot (h,c^{h}) = (gh, b^{gh}\wedge c^{h})$,  the semidirect product.

\section{Further comments}
The restriction idea may provide another useful way of thinking about skew lattices.
It remains to describe categories  of  orthodox $\ast$-semigroups and skew inductive groupoids, and functors establishing an equivalence between them.  Refinement of the axioms may also be possible, and the relationships with the approach of actions (of objects on morphisms and morphisms on objects) should be explored.  The connexions with restriction and Ehresmann semigroups need to be teased out.  More ``natural'' or concrete examples would be desirable---for example, can they be found 
 in rings or override algebras?
\section*{Acknowledgements}
The author thanks Vicky Gould, Michael Kinyon and Anya Kudryavtseva for their interest and input in discussions, both in person and by email, which helped convert a half-baked workshop idea into an interesting project.

\end{document}